\newtheorem{theorem}{Theorem}
\newtheorem{definition}{Definition}
\newtheorem{lemma}{Lemma}
\newtheorem{cor}{Corollary}
\newtheorem{remark}{Remark}
\newtheorem{prop}{Proposition}
\newcommand{\be}{\begin{enumerate}}
\newcommand{\ee}{\end{enumerate}}
\newcommand{\beq}{\begin{equation}}
\newcommand{\eeq}{\end{equation}}
\def\N{{\mathbb{N}}}
\def\Z{{\mathbb{Z}}}
\def\MB{{\mathbb{B}}}
\title{Bi-interpretability of  Some Monoids with the Arithmetic and Applications}
\author{Olga Kharlampovich\footnote{Hunter College, CUNY}\hspace{.1cm}  and Laura L\'opez\footnote{Graduate Center, CUNY}}
\date{}
\begin{document}

\maketitle

\begin{abstract} We will  prove bi-interpretability of the arithmetic $\N = \langle N, +,\cdot, 0, 1\rangle$ and the 
weak second order theory of $\N$ with the free monoid $\mathbb{M}_X$ of finite rank greater than 1 and with a non-trivial partially commutative monoid with trivial center. This bi-interpretability implies that finitely generated submonoids of these monoids are definable.  Moreover, any recursively enumerable language in the alphabet $X$ is definable in $\mathbb{M}_X$.   Primitive elements, and, therefore, free bases are  definable in the free monoid.   It has the so-called QFA property, namely there is a sentence $\phi$ such that every finitely generated monoid satisfying $\phi$ is isomorphic to $\mathbb{M}_X$. The same is true for a partially commutative monoid without center. We also prove that there is no  quantifier elimination in the theory of  any structure that is bi-interpretable with $\mathbb N$ to any boolean combination of formulas from $\Pi_n$ or $\Sigma_n$.

\end{abstract}
Let $\mathbb{B} = \langle B ; \mathcal{L}(\mathbb{B})\rangle$ be an algebraic structure. A subset $A \subseteq B^n$ is called {\em definable} in $\mathbb{B}$ if there is a formula $\phi(x_1, \ldots,x_n)$ in $\mathcal{L}(\mathbb{B})$ such that  $A = \{(b_1,\ldots,b_n) \in B^n \mid \mathbb{B} \models \phi(b_1, \ldots,b_n)\}$.

In model theory an algebraic  structure $\mathbb{A} = \langle A ;f, \ldots, P, \ldots, c, \ldots\rangle$ , where $f,P,c$ stand for functions, predicates and constants,  is said to be interpretable  in a structure $\mathbb{B}$  if there is a  subset $A^* \subseteq B^n$  definable in $\mathbb{B}$, an equivalence relation $\sim$ on $A^*$ definable in $\mathbb{B}$, operations  $f^*, \ldots, $ predicates $P^*, \ldots, $ and constants $c^*, \ldots, $ on the quotient set $A^*/{\sim}$ all definable in $\mathbb{B}$ such that the structure $\mathbb{A}^* = \langle A^*/{\sim}; f^*, \ldots, P^*, \ldots,c^*, \ldots, \rangle$ is isomorphic to $\mathbb{A}$. An interpretation of $\mathbb{A}$ in a class of structures $\mathcal{C}$ is    {\em uniform}  if the formulas that interpret   $\mathbb{A}$ in a structure $\mathbb{B}$ from $\mathcal{C}$   are the same for every structure $\mathbb{B}$ from $\mathbb{C}$.

The following is a principal result on intepretability.

\begin{lemma} \label{lemma:int}
If $\mathbb{A}$ is interpretable in $\mathbb{B}$ with parameters $P$,
then for every formula $\psi(\bar{x})$ of $\mathcal{L}(\mathbb{A})$, one can
effectively construct a formula $\psi^{*}(\bar{z},P)$ of
$\mathcal{L}(\mathbb{B})$ such that for any $\bar{a} \in \mathbb{A}$, one has
that $\mathbb{A} \models \psi(\bar{a})$ if and only if $\mathbb{B}
\models [\psi(\bar{a})]^*$.

\end{lemma}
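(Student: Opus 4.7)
\medskip

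\noindent\textbf{Proof plan.} The plan is to define the translation $\psi \mapsto \psi^{*}$ by recursion on the complexity of $\psi$, and then verify by induction that truth is preserved. Fix first the data of the interpretation: an $L(\mathbb{B})$-formula $\Phi_{A^{*}}(\bar{z})$ (with parameters $P$) defining $A^{*}\subseteq B^{n}$, an $L(\mathbb{B})$-formula $\Phi_{\sim}(\bar{z}_{1},\bar{z}_{2})$ defining $\sim$ on $A^{*}$, for each function symbol $f$ and predicate symbol $R$ of $L(\mathbb{A})$ formulas $\Phi_{f}(\bar{z}_{1},\dots,\bar{z}_{k},\bar{z})$ and $\Phi_{R}(\bar{z}_{1},\dots,\bar{z}_{k})$ defining the graph of $f^{*}$ and the relation $R^{*}$ on $A^{*}/{\sim}$, and tuples $\bar{c}^{*}\in A^{*}$ for each constant $c$. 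Each variable $x$ of $L(\mathbb{A})$ is replaced by an $n$-tuple $\bar{z}_{x}$ of fresh $L(\mathbb{B})$-variables.

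Next I give the recursive translation. For atomic formulas, set $(x=y)^{*}:=\Phi_{\sim}(\bar{z}_{x},\bar{z}_{y})$ and, for a predicate $R(t_{1},\dots,t_{k})$, unfold the terms: replace each subterm $f(u_{1},\dots,u_{m})$ by a fresh tuple $\bar{z}$ together with the conjunct $\Phi_{A^{*}}(\bar{z})\wedge\Phi_{f}(\bar{z}_{u_{1}},\dots,\bar{z}_{u_{m}},\bar{z})$, and finally substitute the resulting tuples into $\Phi_{R}$. Boolean connectives commute with $*$, and quantifiers are relativised to $A^{*}$:
\[
(\exists x\,\psi)^{*}\;:=\;\exists\bar{z}_{x}\bigl(\Phi_{A^{*}}(\bar{z}_{x})\wedge\psi^{*}\bigr),\qquad
(\forall x\,\psi)^{*}\;:=\;\forall\bar{z}_{x}\bigl(\Phi_{A^{*}}(\bar{z}_{x})\to\psi^{*}\bigr).
\]
Since the interpretation is given by explicit formulas, this construction is plainly effective.

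The correctness claim $\mathbb{A}\models\psi(\bar{a})\iff\mathbb{B}\models\psi^{*}(\bar{z}_{\bar{a}},P)$, for any choice of representatives $\bar{z}_{\bar{a}}\in A^{*}$ of the $\sim$-classes coding $\bar{a}$, is then proved by induction on $\psi$. The atomic case is immediate from the fact that the quotient structure $\mathbb{A}^{*}=\langle A^{*}/{\sim};f^{*},\dots,R^{*},\dots,c^{*},\dots\rangle$ is, by hypothesis, isomorphic to $\mathbb{A}$; the Boolean cases are trivial; the quantifier case uses that elements of $\mathbb{A}$ correspond bijectively to $\sim$-classes of tuples in $A^{*}$, so an existential witness in $\mathbb{A}$ lifts to a tuple in $A^{*}$ and conversely.

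The only real subtlety, and the step to watch carefully, is $\sim$-invariance: the translated atomic formulas must give the same truth value for any two $\sim$-equivalent choices of $\bar{z}_{\bar{a}}$. This is what justifies the relativised quantifier clauses. It holds because the graphs defining $f^{*}$ and $R^{*}$ are, by the very definition of an interpretation, $\sim$-saturated in each argument; without this, the induction for quantifiers would fail. Once this point is checked, the induction goes through, and the lemma follows.
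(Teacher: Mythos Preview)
Your proposal is correct and is the standard textbook argument (induction on the complexity of $\psi$, relativising quantifiers to $A^{*}$ and replacing atomic formulas by the defining formulas of the interpretation, with the $\sim$-invariance check ensuring well-definedness). Note, however, that the paper does not actually supply a proof of this lemma: it is stated as a ``principle result on interpretability'' and left without proof, so there is nothing to compare against beyond observing that your argument is the expected one.
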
   

\begin{definition}\label{def:1}
Two algebraic structures $\mathbb{A}$ and $\mathbb{B}$ are said to be
bi-interpretable if they satisfy the following conditions:

\begin{itemize}
\item $\mathbb{B}$ is interpretable in $\mathbb{A}$ as $\mathbb{B}^*$,
  $\mathbb{A}$ is interpretable in $\mathbb{B}$ as $\mathbb{A}^*$,
  which by transitivity implies that $\mathbb{A}$ is interpretable in
  $\mathbb{A}$ as $\mathbb{A}^{**}$ and $\mathbb{B}$ in $\mathbb{B}$
  as $\mathbb{B}^{**}$.
\item There is an isomorphism $\mathbb{A} \to \mathbb{A}^{**}$ which
  is definable in $\mathbb{A}$ and an isomorphism $\mathbb{B} \to
  \mathbb{B}^{**}$ definable in $\mathbb{B}$.
\end{itemize}

\end{definition}  

It follows from Quine's paper \cite{Quine}, Section 4, that for a free monoid  of  rank $n \geq 2$ 
and generating set $X=\{x_1,\ldots,x_n\}$, the arithmetic  $\langle N, +,\cdot, \uparrow , 0, 1\rangle$ is bi-interpretable in $\mathbb{M}_X$ with parameters $X$, where $x\uparrow y$ means $x^y$.  Since the predicate $z=x^y$ is computable and therefore definable in terms of addition and multiplication (see, for example, \cite{M}) it can be removed from the signature. Quine was working with the structure of concatenation with parameters, $\mathscr{C}= \langle C, \frown \rangle$, where $C$ is the set of all finite strings in a
   finite alphabet and $\frown$ is the concatenation operation. 
   This structure $\mathscr{C}$ and the free semigroup are equivalent structures. Quine did not state the monoid version of his results but the presence of  the identity doesn't make a difference and   his results  are also valid for $\mathbb{M}_X$.

  We will show that $\N$ and the weak second order theory of $\N$  are  bi-interpretable with $\mathbb{M}_X$ (with parameters $X$) using the technique that allows also to prove this for a non-trivial free partially commutative monoid ${A}_{\Gamma}$ with trivial center. Here we say that the weak second order theory of  a structure $\mathbb{B}$ is  interpretable in $\mathbb{A}$ if the first-order structure $S(\mathbb{B},\N)$ (see Section \ref{se:2} for precise definition) which has the same expressive power, is interpretable in $\mathbb{A}$.  
  
  This bi-interpretability has  interesting applications.  For example, Theorem \ref{th:sub} states that for any $k \in \mathbb{N}$, there is a formula $\psi(y,y_1,\ldots, y_k,X)$
such that $\psi(g,g_1,\ldots ,g_k,X)$ holds in $\mathbb{M}_X$ if and only if
$g$ belongs to the submonoid generated by  $g_1,\ldots,g_k$. In other
words, finitely generated submonoids of $\mathbb{M}_X$ are definable.
In contrast to this,  it was proved in \cite{KM1} and later in
\cite{pS}  that proper subgroups of a free group are not definable
(except cyclic subgroups when the language contains constants). This
was a solution of an old problem posed by Malcev.  Primitive elements, and, therefore, free bases are not definable in a free group of rank greater than 2 \cite{KM1}, but they are easily definable in the free monoid. This implies that the free monoid  is homogeneous  (two tuples realize the same types if and only if they are automorphically equivalent).
 We will show that there is no  quantifier elimination in the theory of  any structure that is bi-interpretable with $\mathbb N$  to any boolean combination of formulas from $\Pi_n$ or $\Sigma_n$ and, in particular, in  the theory of $\mathbb{M}_X$.  In contrast to this, the theory of a  free group has quantifier elimination to boolean combinations of $\forall\exists$-formulas \cite{KMel},\cite{sela}.  

We also prove that $\mathbb{M}_X$ has the so-called QFA property, namely there is a sentence $\phi$ such that every finitely generated monoid satisfying $\phi$ is isomorphic to $\mathbb{M}_X$ (in contrast to this two non-abelian free groups of different ranks are elementarily equivalent \cite{KMel}, \cite{sela}).

\section{Interpretation of $\mathbb{N}$ in some classes of monoids}

 We will
   use Quine's method from \cite{Quine}, Section 3, to interpret  $\mathbb{N}=\langle N,+,\cdot, 0,1 \rangle $ in
   $\mathbb{M}_X$ and some other monoids with
parameters.  
$\N$     can be interpreted as the centralizer of
     $x_1$ in $\mathbb{M}_{X}$. That is, the interpretation of $\mathbb{N}$ in $\mathbb{M}_{X}$ is the set
 $C(x_1)=\{x_1^n \mid  n \in \mathbb{N}\}$ and is defined by the formula $\theta (y,x_1): x_1y=yx_1 $.  

In the following lemma we will prove that $\N$ is interpretable with parameters not only in $\mathbb{M}_X$ but in a much larger class of monoids. Let $G$ be a monoid containing elements $x_1,x_2$ and  $S$ be the set of non-trivial elements of $G$ that can be represented by subwords of words in the set $$\bar S=\{ 
x_1^{i_1}x_2^{j_1}\cdots x_1^{i_k}x_2^{j_k}\ |
i_1,\ldots , i_k\in \N-\{0\}, j_1,\ldots ,j_k\in\{1,2\}\}.$$
\begin{lemma} \label{lm:1} 
$\mathbb{N}$ is interpretable in any monoid $G$ that contains two elements $x_1$ and $x_2$ such that \begin{enumerate} \item   $x_1$ generates a free cyclic submonoid  $\langle x_1\rangle$ which is  definable;  \item $S$ is definable; \item Distinct words in $\bar S$ represent distinct elements. Let $\bar w$ be the word representing $w\in S$. If $v=u_1uu_2$ and $v,u\in S$, then  $u_1,u_2\in S$ or empty and the equality $\bar v=\bar u_1\bar u\bar u_2$ is graphical. \end{enumerate}
\end{lemma}

The third assumption implies that  both $x_1$ and $x_2$ are not divisors of 1, in particular, not invertible. 

\begin{proof} $\mathbb{N}$ will be interpreted with parameter $x_1$ as $\langle x_1\rangle$.
One has to show  that the operations $+$ and
$\cdot$ and the constants $0$ and $1$ are intepretable.
To interpret addition in $\mathbb{M}_{X}$, we interpret the addition
relation $\{(m,n,k) \mid m+n=k\}$ as the set of triples of the form $(x_1^n, x_1^m, x_1^{n+m})$
which can be defined by the formula $\phi (x,y,z)$: $xy=z$. Thus, we
interpret the constant $0$ in $\mathbb{N} $ as the empty word $\emptyset$ in
$\mathbb{M}_{X}$, and it is easy to see $\emptyset$ is an identity element of
the addition operation defined by $\phi$. To interpret multiplication of $\mathbb{N}$ in $\mathbb{M}_{X}$,
 we show that set $\{(x_1^n, x_1^m, x_1^{nm})\}$ is definable.   Given $x_1^n,x_1^m$, define $w$ as

\begin{equation} 
w(x_1^n,x_1^m)=x_2^2x_1^{n+1}x_2x_1^{m+1}x_2^2x_1^nx_2x_1^{2m+1}x_2^2\ldots  x_2^{2}x_1^2x_2x_1^{mn+1}x_2^2
\end{equation} 


The element $w\in S$ is completely determined by the following conditions: 

\begin{enumerate}
\item [1)] (head)$w=x_2^2x_1^{n+1}x_2x_1^{m+1}x_2^2w_0,$
\item [2)](recursion)If $w=w_1x_2^2w_2x_2^2w_3$ and 
$w_2=v_1x_2v_2, v_1,v_2\in \langle x_1\rangle,$ and $v_1 \neq x_1$ and
$v_1 \neq x_1^2$, then $w_3=v_3x_2v_4x_2^2w_4$, where $v_1=v_3x_1, v_4=v_2x_1^m$,
\item [3)] (tail) $w=w_4x_2^2x_1^2x_2v_5x_2^2$  where $v_5 \in
  \langle x_1\rangle$ or $w=x_2^2x_1x_2x_1^{m+1}x_2^2$.


\end{enumerate}

Conditions 1)--3) can be written in $\mathcal{L}_{\{x_1,x_2\}}$. Note that in
condition 3), we take into account the case where $n=0$,
i.e. $x_1^n=\emptyset$. Let $\psi(x,y,w)$ be the formula defining $w(x,y)$, where $x,y\in \langle x_1\rangle$,
then for $x=x_1^n,y=x_1^m,$ we have $z=x_1^{nm}$ if and only if 
$$\phi (x,y,z): \,x,y,z\in \langle x_1\rangle \wedge (\exists w \, \psi (x,y,w)\wedge
(\exists w_4 \, w=w_4x_2^2x_1^2x_2zx_1x_2^2 \vee z=\emptyset)).$$

The identity element $1$ in $\mathbb{N}$ can be interpreted as
$x_1$. One can easily check that it is consistent with the interpretation of multiplication.

\end{proof}

This result implies that $\N$ is interpretable with parameters  not only in a free monoid but in many interesting monoids.  We need some definitions.

 {\em A Baumslag-Solitar monoid} is given by a presentation $\langle a,b| ab^k=b^ma\rangle.$ 

{\em Free partially commutative monoids} (also known as {\em trace monoids} or {\em right angled Artin monoids}) defined as follows.  Given a finite graph $\Gamma$ with the set of vertices  $V$ and edges $E$ we define such a monoid $A_{\Gamma}$ by generators $V$ and relations $v_1v_2=v_2v_1$ for each pair of vertices $(v_1,v_2)\in E$.
\begin{theorem} \label{th:!} $\mathbb{N}$ is interpretable (with parameters) in $\mathbb {M}_X$  and in the following classes of monoids:
\begin{enumerate}
\item [a)] Baumslag-Solitar monoids with $k,m>2$  (we do not need parameters for them); 
\item [b)] Non-commutative free partially commutative monoids;   \item [c)] One-relator monoids $G=\langle a,b,C|x=y\rangle$, where $C$ is a non-empty alphabet, some letter of $C$ appears in $y$ and neither $x$ nor $y$ end with $a$ (or one could consider the dual case).
\end{enumerate}

\end{theorem}

\begin{proof} a) We take $x_1=a, x_2=b$.  The set $S$ can be defined using the fact that both $x_1$ and $x_2$ are irreducible elements. Notice that  elements $a$ and $b$ are definable, therefore we have interpretability without parameters.

b) We take $x_1=v_1, x_2=v_2$
such that $\Gamma$ does not have an edge between $v_1$ and $v_2.$ A non-trivial  element  is irreducible if it is not a product of two non-trivial elements.  The cyclic submonoid $\langle x_1\rangle$ is definable as the set consisting of the trivial element and non-trivial elements having only $x_1$ as their irreducible divisor. The set $S$ can be defined using the fact that both $x_1$ and $x_2$ are irreducible elements. The generating set of $A_{\Gamma}$ is definable as a set of irreducible elements. The submonoid generated by $x_1,x_2$ is free and an element in $S$ cannot be represented as a word not in $\bar S$, therefore the third assumption  is also satisfied. 

Moreover, the free submonoid generated by $x_1,x_2$ is definable,  hence the statement also follows from transitivity of interpretations.
 
 c) In this case we can assume that the monoid is not free because for a free monoid all the assumptions of Lemma \ref{lm:1} are satisfied.  The element $a$ is irreducible.  By the Freiheitssatz for one relator monoids, $a, b$ generate a free submonoid since a letter from $C$ appears in the relation.  
Note that since neither $x$ nor $y$ end in an $a$, we cannot create an $a$ at the end of a word by applying relations and if $ua, va$ are equal in $G$ them $u$ and $v$ are equal in $G$ (since the derivation from $ua$ to $va$ will never touch the final $a$). 

We show by induction on word length that the centralizer of $a$ is $\langle a\rangle.$
 If $w$ commutes with $a$ then from $aw=wa$ in $G$ and by the above, we have that $w$ graphically ends in $a$, say $w=ua$.  Then $aua=uaa$ in $G$. We deduce by right cancelling $a$, as discussed above, that $au=ua$ in $G$.  By induction $u$ is a power of $a$ and hence $w$ is a power of $a$. 
 
 If $x$  contains some letters from $C$, then the submonoid  generated by $a, b$  is definable. If only $y$ contains some $c\in C$ but $b$ is contained in $x$ and $y$, then we can interchange $c$ and $b$. Suppose that $x$ only  contains $b$ and maybe $a$ and $y$ only contains $c$ and maybe $a$. Then $x\neq b, y\neq c$ because the monoid is not free. Therefore, $b,c$ are irreducible. In this case  we can replace $b$ by $bcb$ in the interpretation and in the definition of the set $S$.
\end{proof}
\begin{cor}  If $G$ is a monoid from Theorem \ref{th:!}, then the first-order theory $Th(G)$ is undecidable.
\end{cor}

To eliminate parameters from the interpretation of $\mathbb{N}$ in $\mathbb{M}_X$ given in Lemma \ref{lm:1} we need the following lemma.
 
\begin{lemma}\label{1}
The relation $\{(x_2^s,x_1^s) \mid s \in \mathbb{N}\}$ is definable in
$\mathbb{M}_{X}$ with parameters $x_1, x_2$.
\end{lemma}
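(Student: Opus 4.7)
The plan is to adapt the Quine-style encoding from the multiplication proof: I will construct a witness word that enumerates the pairs $(x_1^0, x_2^0), (x_1^1, x_2^1), \ldots, (x_1^s, x_2^s)$ in order, and the desired relation will then assert the existence of such a word whose terminal record codes the pair $(v, u)$ with $u \in C(x_2)$ and $v \in C(x_1)$.

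Each pair will be encoded as a record $R_i = x_1^i \delta x_2^i$, where $\delta = x_2 x_1$ is a short internal marker allowing us to first-order recover the $C(x_1)$-prefix $v_1$ and the $C(x_2)$-suffix $v_2$ of each record via the already-definable centralizers $C(x_1)$ and $C(x_2)$. The records are concatenated with an outer separator $\sigma$ chosen so that, as a substring of $w = R_0 \sigma R_1 \sigma \cdots \sigma R_s$, $\sigma$ occurs exactly at the record boundaries and nowhere else. A suitable choice is $\sigma = x_1 x_2^2 x_1$: by direct substring inspection the pattern $x_1 x_2^2 x_1$ never appears inside any $R_i$ (records contain at most a single $x_2 x_1$ alternation in the middle) and does not straddle any boundary $R_i \sigma R_{i+1}$. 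This combinatorial check plays the role that the observation ``$x_2^2$ does not appear inside a power of $x_1$'' played in the previous lemma.

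The witness word $w(s)$ is then pinned down by head/recursion/tail conditions expressible in $L_{\{x_1,x_2\}}$, directly analogous to the multiplication proof: (head) $w$ begins with $R_0 \sigma R_1 \sigma$; (recursion) whenever $w = w_1 \sigma R \sigma w_2$ with $R = v_1 \delta v_2$, $v_1 \in C(x_1)$, $v_2 \in C(x_2)$, and this is not the last record, then $w_2$ begins with $(v_1 x_1)\,\delta\,(v_2 x_2)\cdot\sigma$; (tail) $w$ ends with $\sigma(v\,\delta\,u)$ for $v \in C(x_1)$, $u \in C(x_2)$. Let $\Psi(w,u,v)$ be the conjunction of these three conditions. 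The desired relation is then defined by the formula
\[
u \in C(x_2) \,\wedge\, v \in C(x_1) \,\wedge\, \bigl((u=e \wedge v=e) \vee (u=x_2 \wedge v=x_1) \vee \exists w\, \Psi(w,u,v)\bigr),
\]
where the two base cases $s=0,1$ are written explicitly (so that $s\geq 2$ is covered by the existential part). Uniqueness of $w(s)$ given $s$ follows inductively from head/recursion/tail, exactly as in the previous lemma.

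The main obstacle is the combinatorial design of the separator $\sigma$ and internal marker $\delta$: unlike the multiplication proof, where data blocks were pure powers of $x_1$ and $x_2^2$ trivially delimited them, here data blocks $R_i$ contain both letters and a simple repetition separator no longer suffices. Checking that $\sigma = x_1 x_2^2 x_1$ appears in $w$ only at the record boundaries — and that the recursion formula then uniquely determines the transition $R_i \mapsto R_{i+1}$ — is the one genuinely new step; once it is in place, the argument proceeds exactly as before.
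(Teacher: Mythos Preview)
Your overall plan---encode the increasing sequence of pairs $(x_1^i,x_2^i)$ in a single witness word and pin that word down by head/recursion/tail conditions, then read off the last block---is exactly the paper's approach. The paper likewise first defines the set $\{x_2^sx_1^s\}$ and only afterwards splits it into the pair $(x_2^s,x_1^s)$.

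The gap is in the step you yourself flag as ``the one genuinely new step'': the claim that $\sigma=x_1x_2^2x_1$ occurs in $w=R_0\sigma R_1\sigma\cdots\sigma R_s$ only at the intended boundaries is false. Look at $R_2\sigma=x_1^2\,x_2x_1\,x_2^2\cdot x_1x_2^2x_1$. Starting at the internal $x_1$ of $R_2$ (the $x_1$ coming from $\delta$) one reads $x_1\,x_2^2\,x_1$, using the tail $x_2^2$ of $R_2$ together with the first letter of $\sigma$. So a spurious copy of $\sigma$ straddles this boundary, and your ``direct substring inspection'' fails precisely at $i=2$. Worse, a fixed separator of the shape $x_1x_2^kx_1$ will collide with $R_k$ in exactly the same way for every $k$, so the fix is not merely to lengthen $\sigma$; the design itself has to change.

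The paper sidesteps this by using \emph{growing} separators rather than a fixed one: it sets $a=x_1x_2x_1x_2^2$ and writes
\[
f=a\,(x_2x_1)\,a^2\,(x_2^2x_1^2)\,a^3\cdots(x_2^sx_1^s)\,a^{s+1},
\]
with the recursion keyed on the exponent of $a$. The data blocks $x_2^kx_1^k$ contain no $x_1x_2$ alternation at all, so $a$ cannot occur inside them, and the conditions ``$\bar a\in C(a)$, the neighbouring pieces do not begin/end in $a$'' isolate the maximal $a$-powers unambiguously. That is the idea you are missing: instead of a short $\sigma$ that must avoid infinitely many record shapes simultaneously, use an increasing family of separators whose occurrences are controlled by membership in $C(a)$.
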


\begin{proof}
We begin by defining the set $\{x_2^sx_1^s \mid s \in \mathbb{N} \}$.
Let $a=x_1x_2x_1x_2^2$. 

The monomial $f=ax_2x_1a^2x_2^2x_1^2a^3 \cdots x_2^sx_1^sa^{s+1}$ satisfies the following conditions:

\begin{enumerate}
\item [1)] $f=ax_2x_1a^2g_3 $ where $g_3$ does not begin with $a$
\item [2)] If $f=g_1\bar a g_2\bar a ag_3$ where $\bar a\in C(a),$ $g_1$ does not end in $a$, $g_2$ does not start or end with $a$ and $g_3$ does not start with $a$, then $g_3=x_2g_2x_1\bar a a^2g_4$ where $g_4$ does not start with $a$ or $g_3=\emptyset$.
\item [3)] $f=g_1\bar a u\bar a a$, where $a\in C(a),$ $g_1$ does not end in $a$ and $u$
  does not begin or end with $a$ and is not divisible by $a$.
\end{enumerate}

Conditions 1)--3) can be written in $\mathcal{L}_{\{x_1,x_2\}}$ and
uniquely define a word $f$, so let $\phi(x)$ be the formula defining
all such words $f$, then the formula $$\psi(x): \exists f,g_1,g_1',b \,
\, (\phi(f) \wedge f=g_1bxab \wedge g_1 \neq g_1'a \wedge x \neq x_1a
\wedge x \neq ax_2 \wedge b \in C(a))$$ defines the set $\{x_2^sx_1^s \mid s \in \mathbb{N} \}$.

Now the following formula defines the set of pairs of the form
$( x_2^s,x_1^s) $:

\begin{equation}
Trans(x,y): \exists z \, \psi(z) \wedge z=xy \wedge x \in C(x_2)
\wedge y \in C(x_1)
\end{equation}

\end{proof}

 The basis $X$ consists of all irreducible elements and therefore is definable in $\mathbb{M}_X$ by the formula $\theta(x): \,
 \forall y \forall z \, (x=yz \implies y=\emptyset \vee z=\emptyset$). 

\begin{theorem} \label{th:1} $\N$ is $\emptyset$-interpretable in $\mathbb{M}_X$. \end{theorem}
\begin{proof} We will give a proof for $\mathbb{M}_X$.
 Denote by $\N_{x_{i}}$ the interpretation of $\N$ as $C(x_{i})$, where $x_i$ is an element of the basis. The number $m$ is interpreted as a pair $(x_{i}^m, x_i)$. Lemma  \ref{lm:1} implies that
there is a definable isomorphism  between $\N _{x_i}$ and $\N _{x_j}$ for any
two  elements $x_i$ and $x_j$ of the basis.   
Indeed,  we can
define pairs $(x_i^s,x_j^s), i \neq j$ where $ 1 \leq i,j \leq |X|$, without
parameters with the following formula:

\begin{equation}
\phi(x,y): \exists z_1,z_2 (z_1, z_2 \in X \wedge z_1 \neq z_2 \wedge Trans'(x,y))
\end{equation}
where $Trans'(x,y)$ is the formula $Trans(x,y)$ with any occurrences of
$x_1$ and $x_2$ replaced by $z_1$ and $z_2$, respectively. 

Thus we have a definable (without parameters) equivalence relation on the set of  pairs $(x_{i}^{m},x_i)$ and factoring over this equivalence relation
we identify all the structures $\N_{x_i}$ into one structure  isomorphic to $\mathbb{N}=\langle N,+,\cdot, 0,1 \rangle $. Therefore $\mathbb{N}=\langle N,+,\cdot, 0,1 \rangle $ is $\emptyset$-interpretable in $\mathbb{M}_X$.
\end{proof}

\begin{remark} We can similarly prove $\emptyset$-interpretability of   $\mathbb{N}=\langle N,+,\cdot, 0,1 \rangle $ in the non-commutative monoid $A_{\Gamma}$. \end{remark}

\section{Bi-interpretability of $\mathbb{M}_X$ and some other monoids with 
  $S(\mathbb{N},\mathbb{N})$ }
\subsection{Interpretation of $S(\mathbb{N},\mathbb{N})$ in
  $\mathbb{M}_X$  and some other monoids}\label{se:2}

Let $\mathbb{B}$ be an algebraic structure. The three-sorted structure $S(\mathbb{B},\mathbb{N})$, termed the list
superstructure over $\mathbb{B}$, is defined as 

\begin{equation}
S(\mathbb{B},\mathbb{N})=\langle \mathbb{B}, S(B), \mathbb{N}, t(s,i,a), l(s),
\frown, \in \rangle,
\end{equation}
where $\mathbb{N}= \langle N;+, \cdot, 0, 1 \rangle$ is the standard
arithmetic, $S(B)$ is the set of all finite sequences (tuples) of
elements of $B$, $l:S(B) \to N$ is the length function, i.e., $l(s)$
is the length $n$ of a sequence $s=(s_1,\ldots,s_n)\in S(B)$ and
$t(x,y,z)$ is a predicate on $S(B) \times N \times B$ such that
$t(s,i,a)$ holds in $S(\mathbb{B},\mathbb{N})$ if and only if $s=(s_1,\ldots,s_n)
\in S(B), i \in N, 1 \leq i \leq n$, and $a=s_i \in B$. This structure
has the same expressive power as the weak second order logic of
$\mathbb{B}$.
 
  The following result is known, and it is based on two facts: the first
one is that there are effective codings of the set of all tuples of
natural numbers such that the natural operations over the tuples are
computable on their codes; and the second one is that all computably
enumerable predicates over natural numbers are $\emptyset$-definable in
$\mathbb{N}$ (see \cite{Computability}, \cite{Computability2}).

\begin{lemma} \label{bi-list}
The list superstructure $S(\mathbb{N,N})$ is $\emptyset$-interpretable in
$\mathbb{N}$. Moreover,  $S(\mathbb{N,N})$ and  $\mathbb{N}$ are bi-interpretable.\end{lemma}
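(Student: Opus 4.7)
The plan is to use Gödel's classical $\beta$-function coding of finite sequences. Recall that there is a $0$-definable ternary predicate $\beta(a,i,c)$ on $\mathbb{N}$ such that for every finite sequence $(c_0,\ldots,c_{n-1})$ of naturals there exists $a\in\mathbb{N}$ with $\beta(a,i,c_i)$ holding for all $i<n$. This is the central tool that lets us encode an arbitrarily long tuple by a single natural number, and it is exactly what is meant by the "effective coding of all tuples" referred to in the lemma's preamble.

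First I would represent the sort $S(B)$ by the subset $A^*\subseteq\mathbb{N}\times\mathbb{N}$ consisting of pairs $(a,n)$, thought of as ``the sequence of length $n$ coded by $a$.'' The equivalence relation
\[
(a,n)\sim(a',n') \;\Longleftrightarrow\; n=n'\,\wedge\,\forall i<n\,\forall c\bigl(\beta(a,i,c)\leftrightarrow\beta(a',i,c)\bigr)
\]
is $0$-definable and identifies codes of the same sequence. The other two sorts, $\mathbb{B}=\mathbb{N}$ and the ambient arithmetic copy, are interpreted by $\mathbb{N}$ itself via the identity (with trivial equivalence). With this setup every primitive of $S(\mathbb{N},\mathbb{N})$ becomes $0$-definable: set $l([(a,n)])=n$; define $t([(a,n)],i,c)$ by $1\le i\le n\wedge\beta(a,i-1,c)$; define concatenation $[(a,n)]\frown[(a',n')]=[(a'',n+n')]$ by asserting the existence of $a''$ with $\beta(a'',j,c)\leftrightarrow\beta(a,j,c)$ for $j<n$ and $\beta(a'',n+j,c)\leftrightarrow\beta(a',j,c)$ for $j<n'$ (such $a''$ exists by the $\beta$-function lemma); and $c\in[(a,n)]$ is $\exists i<n\,\beta(a,i,c)$.

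For bi-interpretability I must produce definable isomorphisms for the two compositions of interpretations. Let $\Phi$ denote the interpretation of $S(\mathbb{N},\mathbb{N})$ in $\mathbb{N}$ just described, and let $\Psi$ denote the canonical interpretation of $\mathbb{N}$ in $S(\mathbb{N},\mathbb{N})$ (the arithmetic $\mathbb{N}$ is literally one of the sorts). The composition $\Psi\circ\Phi$ interprets $\mathbb{N}$ inside $\mathbb{N}$ with universe $\mathbb{N}$ itself, so the identity is a $0$-definable isomorphism. For $\Phi\circ\Psi$, an element of the doubled $S(B)^{**}$ is represented by a code-pair $(a,n)$ living in the $\mathbb{N}$-sort of $S(\mathbb{N},\mathbb{N})$; the map sending $(a,n)$ to the actual tuple it codes is given by the predicate $t$ from the language of $S(\mathbb{N},\mathbb{N})$, and its graph is therefore $0$-definable.

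The main technical obstacle is not in the proof strategy itself but in the underlying fact that $\beta$ (or an equivalent prime-power encoding) is $0$-definable in $\mathbb{N}$ and that the concatenation-of-codes operation exists and is $0$-definable; both facts are standard and we simply cite \cite{Computability}, \cite{Computability2}. The only remaining bookkeeping point is to verify that the quantifiers used in $\sim$ and in concatenation can be bounded (by $n$, $n'$, or their sum), so that the defining formulas stay at a low level of the arithmetic hierarchy; this is a routine but tedious verification.
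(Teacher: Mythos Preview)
Your proposal is correct and matches the paper's own treatment: the paper does not actually prove this lemma but states it as known, citing \cite{Computability,Computability2} and remarking that it rests on (i) an effective coding of finite tuples of naturals by single naturals under which the tuple operations are computable, and (ii) the $0$-definability in $\mathbb{N}$ of all computably enumerable predicates. Your $\beta$-function construction is precisely an instance of such a coding, and the bi-interpretability argument you sketch (with $\mathbb{N}$ sitting inside $S(\mathbb{N},\mathbb{N})$ as one of the sorts, and the decoding map $(a,n)\mapsto s$ expressed via $t$ and $\beta$) is exactly what the paper alludes to later in Section~\ref{elimination}. One minor remark: your closing comment about bounding quantifiers to keep the defining formulas low in the arithmetic hierarchy is unnecessary here, since in this paper ``$0$-interpretable'' means ``interpretable without parameters,'' not ``$\Delta_0$-interpretable.''
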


We have shown in the previous section that $\mathbb{N}$ is
interpretable in a monoid $G$ satisfying the assumptions of Lemma \ref{lm:1}. Thus, by transitivity of
interpretability, we have that $S(\mathbb{N,N})$
is intepretable in $G$ and, therefore, in $\mathbb{M}_X$.

In this section we will construct a direct intepretation of
$S(\mathbb{N,N})$ in $\mathbb{M}_X$, which we will use in Section \ref{se:2.3}. The same technique also works for $A_{\Gamma}$ without center.

\begin{lemma}
 $S(\mathbb{N,N})$ is interpretable in
  $\mathbb{M}_X$ and in any non-commutative  $A_{\Gamma}.$
\end{lemma}
   
\begin{proof} We will give the proof for $\mathbb {M}_X$.
Recall that the set $C(x_1)=\{x_1^n 
\mid n \in \mathbb{N}\}$ is interpretable in $\mathbb{M}_X$ with
parameter $x_1$ and
similarly $C(x_2)$ is interpretable with parameter $x_2$. 

To interpret
$S(\mathbb{N},\mathbb{N})$ in $\mathbb{M}_X$, we first interpret a
tuple $t=(t_1,\ldots,t_m)$ in $S(N)$ with $m \geq 1$
as a word

\begin{equation}
 w_t=x_1x_2^{t_1+1}x_1^2x_2^{t_2+1} \cdots x_1^mx_2^{t_m+1}
\end{equation}

Note that any such word $w_t$ is completely determined by $t$ and the following
conditions:
\begin{enumerate} 
\item [1)] (head) $w_t=x_1x_2g_1$ 
\item [2)] (recursion) If $w_t=g_3x_1^ig_4$ where $g_3 \neq g_3'x_1$, $g_4=x_2g_4'$, then $g_4=g_5x_1^{i+1}g_6$ where $g_5 \in C(x_2)$ and
  $g_6=x_2g_6'$, or $g_4 \in C(x_2)$.
\item [3)] (tail) $w_t=g_7x_2$
\end{enumerate}


Conditions 1)--3) are definable in $\mathcal{L}_{\{x_1,x_2\}}$, so
there is a formula $w(x)$ defining the set of words $w_t$ for $t \in S(N)$.

       Next, we interpret the relations $\in$, $t(s,i,a)$,
       $length(s,n)$. The set of triples $(w,x_1^i,x_1^a)$, where $a$ is
 the $i^{th}$ component of the tuple given by $w$, can be defined
 by the following position formula, which says roughly that $x_1^ix_2^{a+1}$ is
 a subword of $w$:

\begin{align*}
t(x,y,z)\colon &w(x) \wedge y \in C(x_1) \wedge\\
& (\exists g_1,g_2,g_1',g_2',v \;\;
x=g_1yx_2vg_2  \wedge g_1 \neq g_1'x_1 \wedge g_2 \neq x_2g_2' 
\wedge\\
&v \in C(x_2) \wedge Trans(v,z))
\end{align*}

The set of pairs $(x_1^a,w)$ where $a$ is a component of the tuple
encoded in $w$, can be defined by the formula $ In(x,y): \,
\exists z \, t(y,z,x)$. Finally, the length relation  can be defined by
the formula $l(x,y): w(x) \wedge y \in C(x_1) \wedge \exists
g_1,g_2,g_1' \, x=g_1yg_2 \wedge
g_1 \neq g_1'x_1 \wedge g_2 \in C(x_2)$.

 
Next we interpret the concatenation operation in $\mathbb{M}_X$. Suppose we have words $w_1$ and $w_2$ corresponding to the tuples
   $(t_1,\ldots ,t_m)$ and $(p_1,\ldots ,p_n)$ respectively. Let
   $w_2'=x_1^{m+1}x_2^{p_1+1} \cdots x_1^{m+n}x_2^{p_{n+1}}.$ Then $w_2'$
   has the following properties:
\begin{enumerate}
\item [1)] (head)$w_2'=x_1^{m+1}x_2g_1$, where $m$ is the length of $w_1$
\item [2)](recursion) If $w_2'=g_2x_1^{m+i}g_3$ where $g_2 \neq g_2'x_1$ and $g_3=
x_2g_3'$, then $g_3=x_2^{p_i+1} x_1^{m+i+1}g_4$, where
  $g_4=x_2g_4'$, or $g_3=x_2^{p_i+1}$.
\item [3)] (tail) $w_2'=g_5x_2$
\end{enumerate}

All of these properties are definable with
parameters $x_1$, $x_2$, and with the formulas
defining the interpretations of the length and position functions. Thus, there is a formula $\phi
(x,y, z)$ such that $\mathbb{M}_X \models \phi(w_1,w_2,w_2')$ when $w_1,w_2,w_2'$
are as above. Now let $t_3$ be the concatenation of the tuples $t_1$
and $t_2$. Let the corresponding words be $w_1,w_2,w_3$
respectively. Then the formula $Concat(x,y,z)$: $\exists u \,
\phi(x,y,u) \wedge z=xu$ defines concatenation uniformly in $X$, that is,
$Concat(w_1,w_2,w_3)$ holds when $t_1 \frown t_2 =t_3$ and if instead of $x_1,x_2$ we take another basis $x_i,x_j$ we just have to replace in the formula  $x_1,x_2$ by $x_i,x_j$.

To eliminate parameters we can now, as in the proof of Theorem \ref{th:1}, define by a formula the equivalence relation identifying elements $w_t(x_i,x_j)$ for all pairs $(x_i,x_j)$ of  basis elements. 

The proof for $A_{\Gamma}$ is similar.
\end{proof}

\subsection{Interpretation of $\mathbb{M}_X$ and other monoids in
  $S(\mathbb{N},\mathbb{N})$ }\label{biint}

Let $X=\{x_1,\ldots , x_n\}$. We interpret a monomial
$x_{i_1}x_{i_2}\cdots x_{i_m} \in \mathbb{M}_X$ as the tuple
$(i_1,i_2,\ldots ,i_m)$. Let $T=\{(t_1,\ldots ,t_m) \mid 1 \leq t_i \leq n, \, m
\in \mathbb{N}\}$, then any element of $T$ can be uniquely associated to a
monomial in $\mathbb{M}_X$. So, $\mathbb{M}_X$ can be interpreted in
$S(\mathbb{N},\mathbb{N})$ as the set $T$. It is easy to see $T$ is
definable since the conditions $1 \leq t_i \leq n$ and $m \in
\mathbb{N}$ can be written in the language of
$S(\mathbb{N},\mathbb{N})$. Multiplication in $\mathbb{M}_X$ can be
interpreted as concatenation. So, $\mathbb{M}_X$ is $\emptyset$-interpretable in
$S(\mathbb{N},\mathbb{N})$.

\begin{lemma} \label{lm:nonf} If $G$ is a monoid from Theorem \ref{th:!}, a), b) or a monoid with solvable word problem from c), then $G$ is interpretable in $S(\N,\N)$ and in $\N$.\end{lemma} 
\begin{proof} Notice that the word problem is solvable in monoids from a) and b). One can recursively enumerate all short-lex  forms of elements in G and encode them as tuples in $\N$  the same way as this is done for $\mathbb{M}_X.$ Multiplication is not just concatenation anymore, but the corresponding predicate is recursively enumerable and therefore definable in $S(\N,\N)$.\end{proof}

\subsection{Bi-interpretation}\label{se:2.3}
\begin{theorem}\label{th:bi}
1. $S(\mathbb{N,N})$ and $\mathbb{M}_X$  are bi-interpretable with parameters in $X$ uniformly in $X$.

2. If a non-trivial free partially commutative monoid $A_{\Gamma}$ has trivial center, then $S(\mathbb{N,N})$ and $A_{\Gamma}$ are bi-interpretable with the standard generating set (vertices $V$ of $\Gamma$) as parameters.\end{theorem}

1. We first will prove bi-interpretability of $S(\mathbb{N,N})$ and $\mathbb{M}_X$   uniformly in $X$.

Denote by $\mathbb{M}_X^*$ the interpretation of $\mathbb{M}_X$ in   $S(\mathbb{N,N})$, and by
$S(\mathbb{N,N})^*$ the interpretation of $S(\mathbb{N,N})$ in $\mathbb{M}_X$.  Denote the images of $ \mathbb{M}_X$ and
$S(\mathbb{N,N})$ in themselves by $\mathbb{M}_X^{**}$ and
$S(\mathbb{N,N})^{**}$, respectively.  
To show bi-interpretability, it remains to show that the isomorphisms
  $S(\mathbb{N,N}) \to S(\mathbb{N,N})^{**}$ and $\mathbb{M}_X \to
 {\mathbb{M}_X}^{**}$ are definable in $S(\mathbb{N,N}) $ and
  $\mathbb{M}_X$, respectively. The isomorphism
  $\psi:S(\mathbb{N,N}) \to S(\mathbb{N,N})^{**}$ is the composition
  of the map taking a tuple $t=(t_1,\ldots,t_m) \mapsto
  w_t=x_1x_2^{t_1+1} \cdots x_1^mx_2^{t_m+1} $ and the map taking $M=
  x_{t_1} \cdots x_{t_m}  \mapsto t_M=(t_1,\ldots,t_m)$. So, $
\psi (t)=(1,2,\ldots,2) \frown (1,1,2,\ldots,2) \frown \cdots \frown
(1,\ldots,1,2,\ldots,2)$ where the $i^{th}$ tuple has $i$ $1$'s and $t_i+1$ $2$'s.
Since every recursively enumerable predicate is definable in $\N$ we have the following
\begin{lemma}
The isomorphism $\phi:S(\mathbb{N,N}) \to S(\mathbb{N,N})^{**} $ mapping
$t \mapsto t_{w_t}$ is $\emptyset$-definable in $S(\mathbb{N,N}).$ 
\end{lemma}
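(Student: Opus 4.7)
The plan is to exhibit a first-order formula $\Phi(u,s)$ in the language of $S(\mathbb{N},\mathbb{N})$, using only the parameters $1$ and $2$, that defines the graph of $\phi$ (I will use $u$ for the input tuple to avoid clashing with the position predicate $t(\cdot,\cdot,\cdot)$). The key structural observation is that $\phi(u)$ is the concatenation of $m=l(u)$ blocks, where the $i$-th block is the tuple $(\underbrace{1,\ldots,1}_{i},\underbrace{2,\ldots,2}_{u_i+1})$. Rather than try to capture the recursion on $i$ directly, I would introduce an auxiliary sequence $\sigma=(\sigma_1,\ldots,\sigma_{m+1})\in S(N)$ of cumulative block lengths satisfying $\sigma_1=0$ and $\sigma_{i+1}=\sigma_i+i+u_i+1$. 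Its existence is a single existential $S(N)$-quantifier, and the recursion on the definition of $\phi$ becomes a universal statement about adjacent entries of $\sigma$.

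Concretely, $\Phi(u,s)$ will assert: there exists $\sigma\in S(N)$ with $l(\sigma)=l(u)+1$, first entry equal to $0$, last entry equal to $l(s)$, and whose consecutive entries obey the recurrence above; and moreover for every $k$ with $1\le k\le l(s)$ there is an $i$ with $\sigma_i<k\le\sigma_{i+1}$ for which $s_k=1$ when $k\le\sigma_i+i$ and $s_k=2$ otherwise. Here expressions like $\sigma_{i+1}=\sigma_i+i+u_i+1$ and $s_k=1$ are shorthand for clauses using the position predicate together with addition on $\mathbb{N}$, so the whole conjunction is first-order in the language of $S(\mathbb{N},\mathbb{N})$.

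The main conceptual step is the use of $\sigma$ to turn a definition by recursion into a single first-order sentence; this is the main obstacle, but a mild one, since the expressive power of the list superstructure is precisely designed to witness recursions of bounded depth by single elements of $S(N)$. Once $\Phi$ is written, correctness is verified by an easy induction on $m=l(u)$: the recurrence determines $\sigma$ uniquely from $u$, and the position clauses then pin $s$ down entry by entry to be exactly $\phi(u)$. The parameters $1$ and $2$ enter only through the clauses $s_k=1$ and $s_k=2$, so no additional parameters are needed.
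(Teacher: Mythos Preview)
Your proposal is correct. The idea of introducing an auxiliary tuple $\sigma\in S(N)$ of cumulative block endpoints, quantifying over it existentially, and then pinning down each entry of $s$ by locating its position within a block, is sound: all the ingredients (length, the position predicate $t$, addition and order on $\mathbb{N}$) are available in the language of $S(\mathbb{N},\mathbb{N})$, and the recurrence $\sigma_{i+1}=\sigma_i+i+u_i+1$ together with $\sigma_1=0$ determines $\sigma$ uniquely from $u$, so $\Phi(u,s)$ defines exactly the graph of $\phi$.

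The paper takes a different route. Instead of a witness tuple of partial sums, it characterizes the image tuple directly by a \emph{head/recursion/tail} pattern of the same kind used repeatedly elsewhere in the paper: one first defines, for each $i$, the constant tuples $v_i=(1,\ldots,1)$ of length $i$ and $u_i=(2,\ldots,2)$ of length $t_i+1$ (both definable with parameters $1,2$), and then says that the image begins with $(1,2)$, that every maximal block $v_i$ of $1$'s is followed by $u_i\frown v_{i+1}$ (or by $u_i$ at the end), and that the image ends in $2$. This is a local, pattern-matching description using $\frown$ rather than an arithmetical description via cumulative lengths. Your approach is a bit more explicit and avoids the bookkeeping about maximal runs; the paper's approach has the virtue of matching the style of the other definability arguments (for $w_t$, for $a_m$, etc.) and of using $\frown$ directly. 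Either method yields the lemma with only $1$ and $2$ as parameters.
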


To show that the isomorphism $\phi: \mathbb{M}_X \to
 {\mathbb{M}_X}^{**}$ is definable, note that this map is the
 composition of the map sending $x_{i_1} \cdots x_{i_m} \mapsto
 (i_1,\ldots,i_m)$ and the map sending $(i_1,\ldots,i_m) \mapsto
 x_1x_2^{i_1+1} \cdots x_1^mx_2^{i_m+1}$. We will show that this
 isomorphism is definable with parameters in $X$. Recall that the set
 $X$ is definable in $\mathbb{M}_X$. 

  We first define a relation $R=\{(x_1,x_1),
  (x_2,x_1^2),\ldots,(x_n,x_1^n)\}$ that pairs up the index of an element
  in $X$ with its interpretation. In the language $\mathcal{L}_X$, this relation
  is certainly definable. We will call elements $(x_i,x_1^i)$ pairs.

Next, given a number $m \in \mathbb{N}$, we define an element $a_m \in
\mathbb{M}_{\{x_1,x_2\}}$ by 

\begin{equation} 
a_m=x_2x_1x_2x_1^2x_2x_1^3 \cdots x_2x_1^m
  \end{equation}

\begin{lemma}\label{lemma-a}
The set of pairs $B=\{(a_m,x_1^m) \mid m \in \mathbb{N}, m>0 \}$ is
definable in $\mathbb{M}_X$. 
\end{lemma}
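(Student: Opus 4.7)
The plan is to follow the same head--recursion--tail template used in Lemma~\ref{1} and in the interpretation of multiplication (the word $w$ construction): I will write a first-order characterization of $a_m$ in the language $L_{\{x_1,x_2\}}$ that uniquely determines it once the terminal $x_1$-power is specified, and then read off the pair $(a_m, x_1^m)$ from the tail condition. Parameters $x_1,x_2$ are available, and the set $C(x_1)$ is already known to be definable.

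Concretely, I would characterize $a_m$ as the unique word $a\in\mathbb{M}_{\{x_1,x_2\}}$ satisfying the three conditions:
\begin{enumerate}
\item (head) $a = x_2 x_1 r$ where either $r=e$ or $r$ begins with $x_2$;
\item (recursion) whenever $a = u_1 x_2 v x_2 u_2$ with $v\in C(x_1)$, $u_1$ not ending in $x_1$, and $u_2$ not beginning with $x_1$, there either exists $u_3$ with $u_2 = v x_1 x_2 u_3$ (where $u_3$ begins with $x_2$ or is empty) or $u_2 = v x_1$;
\item (tail) $a = u_4 x_2 y$ for some $y\in C(x_1)$, $y\ne e$, with $u_4$ not ending in $x_1$.
\end{enumerate}
Each of (1)--(3) is expressible in $L_{\{x_1,x_2\}}$ using the definability of $C(x_1)$. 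Denote the conjunction of these, with $y$ as a free variable appearing in the tail, by $B(x,y)$; this is the desired formula.

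To finish, I would verify two directions. Soundness is immediate: $a_m$ obviously satisfies (1)--(3) with $y=x_1^m$. For uniqueness, I would argue by induction on the number of occurrences of $x_2$ in a word $a$ satisfying $B(a,y)$: the head pins down the initial block as $x_2 x_1$; the maximality conditions on $v$ in (2) force each recursive application to see a full $x_1$-block between consecutive $x_2$'s, so the $k$-th block must be $x_2 x_1^k$; and the tail (3) identifies the terminating block and picks out $y$ as $x_1^m$. Hence $B(x,y)$ defines exactly $\{(a_m,x_1^m)\mid m\in\mathbb{N},\,m>0\}$.

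The main obstacle, and the only nontrivial point, is calibrating the maximality clauses in the recursion so that the formula really does force the exponent to increment by exactly one. The phrasing ``$u_1$ does not end in $x_1$, $u_2$ does not begin with $x_1$'' guarantees that $v$ is read as the entire maximal $x_1$-block between two consecutive $x_2$'s, and the alternative $u_2 = v x_1$ in the recursion is needed to accommodate the situation where the very next block $v x_1$ is the terminal block (i.e.\ there is no further $x_2$ following it). Once this is set up, the rest is the standard uniqueness-by-induction argument already used twice in the paper.
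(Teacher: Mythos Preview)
Your approach follows the paper's head--recursion--tail template, but the maximality side-conditions you have inserted are miscalibrated and break both directions of the characterization.

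Consider the tail first. For $m\geq 2$ one has $a_m = u_4\, x_2\, x_1^{\,m}$ with $u_4 = x_2 x_1 x_2 x_1^{2}\cdots x_2 x_1^{\,m-1}$, and this $u_4$ ends in $x_1$. Hence your tail clause ``$u_4$ not ending in $x_1$'' is violated by $a_m$ itself for every $m\geq 2$; soundness already fails.

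The recursion has the dual defect. In any decomposition $a = u_1 x_2 v x_2 u_2$ with $v\in C(x_1)$, the two displayed $x_2$'s are automatically \emph{consecutive} occurrences of $x_2$ in $a$ (because $v$ contains no $x_2$), so $v$ is the full $x_1$-block between them without any further hypothesis; your justification that the side-conditions are needed to make $v$ maximal is mistaken. Worse, for every word of the shape $x_2 x_1^{k_1} x_2 x_1^{k_2}\cdots x_2 x_1^{k_n}$ with all $k_i\geq 1$, the suffix $u_2$ begins with $x_1$, so your hypothesis ``$u_2$ not beginning with $x_1$'' is never satisfied and the recursion is vacuously true --- it enforces nothing, and uniqueness collapses. (Your clause ``$u_3$ begins with $x_2$ or is empty'' in the conclusion is wrong for the same reason: in $a_m$ the word $u_3$ begins with $x_1$.) The paper's proof simply omits all of these side-conditions: its recursion hypothesis is only $v\in C(x_1)$ and $u_2\neq x_1^{\,m}$, with conclusion $u_2 = v x_1 x_2 u_3$ and no restriction on $u_3$; the tail is just $a_m = u_4 x_2 x_1^{\,m}$. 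That is already enough because the sandwiching by $x_2$'s pins down $v$, and adding your extra clauses destroys the argument rather than sharpening it.
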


\begin{proof}
The monomials $a_m$ are completely determined by $m$ and the following
conditions:

\begin{enumerate}
\item [1)] (head) $a_m=x_2x_1x_2u$
\item [2)] (recursion) If $a_m=u_1x_2vx_2u_2$ with $v \in C(x_1), u_2 \neq
  x_1^m$, then $u_2=vx_1x_2u_3$.
\item [3)]  (tail) $a_m=u_4x_2x_1^m$
\end{enumerate}

The conditions are definable in $\mathbb{M}_X$ in the language
$\mathcal{L}_{\{x_1,x_2\}}$,  so the relation $B$ is definable in
$\mathbb{M}_X$ with parameters $x_1$ and $x_2$.
\end{proof}

\begin{lemma} \label{iso-M}

The isomorphism $\phi: \, \mathbb{M}_X^{**} \to {\mathbb{M}_X}$
sending $w_M=x_1x_2^{i_1+1} \cdots
x_1^mx_2^{i_m+1} \mapsto M=x_{i_1} \cdots x_{i_m}$ is definable in $ \mathbb{M}_X$ with parameters in $X$ uniformly in $X$.

\end{lemma}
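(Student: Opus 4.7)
My plan is to follow the pattern of Lemmas \ref{1} and \ref{lemma-a}: I define a witness word $f_{w,M}$ whose existence, together with head/recursion/tail conditions expressible in $L_X$, captures precisely the pairing between $w_M \in \mathbb{M}_X^{**}$ and $M \in \mathbb{M}_X$. The defining formula $\phi(w,M)$ will assert the existence of such $f$, using the relation $R$ from the preceding lemma together with the relation $B=\{(a_m, x_1^m)\}$ from Lemma \ref{lemma-a} to tie each block of $w$ to the corresponding letter of $M$ via the pairing $x_{i_j} \leftrightarrow x_1^{i_j}$.

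Concretely, given $M = x_{i_1}\cdots x_{i_m}$ and $w_M = x_1 x_2^{i_1+1} x_1^2 x_2^{i_2+1}\cdots x_1^m x_2^{i_m+1}$, I take
\[
f_{w,M} \;=\; a_1 \cdot x_{i_1} \cdot x_2^{i_1+1} \cdot a_2 \cdot x_{i_2}\cdot x_2^{i_2+1}\;\cdots\; a_m \cdot x_{i_m} \cdot x_2^{i_m+1},
\]
where the $a_j$'s are the counter words from Lemma \ref{lemma-a}. Each $a_j$ is a definable position marker via $B$, so the head condition says $f$ begins with $a_1\cdot y_1 \cdot x_2^{k_1+1}$ for some $y_1\in X$ and $k_1\geq 0$ with $R(y_1, x_1^{k_1})$; the recursion condition says that after each segment $a_j\cdot y_j\cdot x_2^{k_j+1}$ either $f$ terminates or the next segment is $a_{j+1}\cdot y_{j+1}\cdot x_2^{k_{j+1}+1}$, with $a_{j+1}$ obtained from $a_j$ by appending $x_2 x_1^{j+1}$, $y_{j+1}\in X$, and $R(y_{j+1}, x_1^{k_{j+1}})$; and the tail condition pins down the last block. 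All three conditions are first-order in $L_X$ using $B$, $R$, and the definable set $X$.

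The formula $\phi(w, M)$ then asserts the existence of such an $f$ along with the recoveries of $w$ and $M$: $w$ is recovered as the concatenation of the blocks $x_1^j \cdot x_2^{k_j+1}$ read from $f$'s $a_j$- and $x_2$-parts, and $M$ is recovered as the concatenation of the letters $y_j$ appearing between them. Both recoveries are themselves definable in $\mathbb{M}_X$ by analogous witness-word constructions, or directly by projection formulas exploiting the structure already forced on $f$. The isomorphism $\phi$ is then the inverse of the relation defined by $\phi(w, M)$, so by Lemma \ref{lemma:int} it too is definable.

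The main obstacle will be unambiguous parsing of $f_{w,M}$ when $y_j = x_{i_j}$ coincides with $x_1$ or $x_2$: in those cases the boundaries between $a_j$, the inserted letter $y_j$, and the block $x_2^{i_j+1}$ can blur into neighboring pure-power runs. Handling this requires pinning down where each $a_j$ ends via its characterization in Lemma \ref{lemma-a} and exploiting that $a_{j+1}$ must begin with $x_2 x_1$, so the boundary between $x_2^{k_j+1}$ and $a_{j+1}$ is always detectable. If residual ambiguity remains, a longer separator of the form $x_1^{j+1}x_2^2$ can be inserted between the $x_2^{k_j+1}$ block and $a_{j+1}$, producing a pattern that cannot occur inside the data blocks and restoring a clean recursion step.
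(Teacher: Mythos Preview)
Your approach is the same in spirit as the paper's: encode the correspondence $w_M \leftrightarrow M$ via a witness word pinned down by head/recursion/tail conditions. The difference is in what the witness stores. You place the \emph{individual letters} $x_{i_j}$ between separators $a_j$, whereas the paper places the \emph{growing prefixes} $x_{i_1}\cdots x_{i_j}$ between separators of the form $a^{j}x_2^{j}a^{j}$ for a single fixed $a=a_m$. As a result, in the paper $M$ is literally the last content block of the witness and can be read off by one existential decomposition; in your scheme $M$ does not occur as a subword of $f_{w,M}$ at all, and you must still ``recover $M$ as the concatenation of the letters $y_j$.'' That recovery is precisely the hard step, and your sketch waves at it (``analogous witness-word constructions, or directly by projection formulas'') without saying how.

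Concretely, first-order logic does not let you form an unbounded product $y_1\cdots y_m$; you need a second witness encoding the partial products $x_{i_1},\,x_{i_1}x_{i_2},\,\ldots,\,x_{i_1}\cdots x_{i_m}$. But now the content blocks are arbitrary monomials of length up to $m$, not just single generators or powers of $x_1,x_2$, so the separators must be chosen so that no content block can contain them as a factor. Your separators $a_j$ (with $|a_1|=2$) are far too short for this; the paper's key device is to fix $a=a_m$ once and for all, whose length exceeds $m$, so that no prefix $x_{i_1}\cdots x_{i_j}$ (of length $j\le m$) can contain $a$, making the ``does not begin/end with $a$'' conditions do real work. Without that idea your recursion for the second witness cannot be parsed unambiguously, and the final step---extracting $M$---does not go through. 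If you rewrite your witness to carry partial products and switch to a single long separator $a_m$ (tied to $w_M$ via the length relation and Lemma~\ref{lemma-a}), you will have reproduced the paper's argument.
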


\begin{proof}
Recall that for a word $w_M=x_1x_2^{i_1+1} \cdots
x_1^mx_2^{i_m+1}$ we have defined a length relation and the position
relation $t(s,i,a)$. Thus, for $w_M$ let $a=a_m$, where $m$ is the
length of $w_M$, and define a word $w$ as
follows:

\begin{equation}
w=(ax_2a)x_{i_1}(a^2x_2^2a^2)x_{i_1}x_{i_2}(a^3x_2^3a^3)
\cdots x_{i_1}x_{i_2}\cdots x_{i_m}(a^{m+1}x_2^{m+1}a^{m+1})
\end{equation}

The word $w$ is completely determined by $w_M$ and the following
conditions:
 
\begin{enumerate}
\item [1)] (head) $w=(ax_2a)x_{i_1}(a^2x_2^2a^2)v_1$, where
  $x_{i_1}$ is the pair of the first component of $w_M$.
\item [2)] (recursion) For any $j\in \mathbb N , 0<j<m$, and for any $v_2,v_3,v_4\in M$, if $w=v_2 (\bar ax_2^j\bar a) v_3 (\bar aax_2^{j+1}\bar aa)v_4$,  where
 $\bar a \in C(a)$,
 $v_2, v_3$ do not end in $a$,  $v_3,v_4$ do not start with $a$,
\newline
then  $v_4=v_3x_{i_{j+1}} (\bar a a^2x_2^{j+2}\bar a a^2) v_5$,
where $v_5$ does not begin with $a$, and $x_{i_{j+1}}$ is the pair of the
$(j+1)'{st}$ component of $w_M$

\item [3)] (tail) $w=v_6(\bar ax_2^{m+1}\bar a)$, where   $\bar a \in C(a)$ and $v_6$ does not end with $a$. 
\end{enumerate}

Conditions 1)--3) are definable with parameters in $X$. Thus, there
is a formula $\theta_0 (x,z, X)$ such that $\theta_0 (w_M,w, X)$
holds in $\mathbb{M}_X$ whenever $w_M, w$ are as we defined them.
Then the formula $\theta_1 (x,y,X): \exists z, y  \, (\theta_0(x,z,X) \wedge
  z=u(\bar ax_2^m\bar a)y(\bar aax_2^{m+1}\bar aa) \wedge
\forall u'\ (u \neq u'a)) $ defines a pair $(w_M, M)$ with parameters in $X$. 
\end{proof}

The first statement of Theorem \ref{th:bi} is proved now.

To prove the second statement we need an analog of Lemma \ref{iso-M}  but we have to make such an element $a=a_m$ that does not commute with any generator and 
the element $w$ is uniquely defined by 1)--3) and $w_M$. If in Lemma \ref{lemma-a} we replace any occurrence of $x_2$ in $a_m$ by the product of all the  generators in $V$ except $x_1$, then  the proof of Lemma \ref{iso-M} will work. This proves the second statement of Theorem \ref{th:bi} .

\begin{cor}\label{bi_M-N}  $\mathbb{M}_X$  and $\mathbb N$ are bi-interpretable with parameters $X$  uniformly in $X$.
\end{cor}
There are immediate very important corollaries.
Similarly to the arithmetic, the first-order theory of  every structure $\MB$ bi-interpretable with $\N$  has the same expressive power as the weak second order theory of $\MB$. Namely, every statement about  $\MB$ that  can be expressed in the weak second order logic of $\MB$ can be expressed in the first-order logic. 

\begin{cor}   If $\MB$  and $\N$ are bi-interpretable, then $\MB$ and $S(\MB,\mathbb N)$ are bi-interpretable. 
\end{cor}
\begin{proof} Since $\MB$ and $\N$ are bi-interpretable  we have that
$S(\MB,\mathbb N)$ and $S(\mathbb N,\mathbb N)$ are bi-interpretable.    At the same time
$\mathbb N$ and $S(\mathbb N,\mathbb N)$ are 
bi-interpretable. Therefore  $\MB$  and $S(\MB,\mathbb N)$ are bi-interpretable. \end{proof}
\begin{cor}   $\mathbb{M}_X$  and $S(\mathbb{M}_X,\mathbb N)$ are bi-interpretable with parameters $X$ uniformly in $X$.
\end{cor}
This result shows that shows that structures bi-interpretable with arithmetic are rich (meaning that they have many definable sets).
\begin{cor}   A non-trivial $A_{\Gamma}$  with trivial center and $S(A_{\Gamma}, \mathbb N)$ are bi-interpretable with parameters $V$ uniformly in $V$.
\end{cor}

\section{Definability of a submonoid }

Consider now the submonoid of $\mathbb{M}_X$ generated by the elements $g_1, \ldots, g_k$, that is,
$\langle g_1,\ldots,g_k\rangle$.

\begin{theorem} \label{th:sub}
For any $k \in \mathbb{N}$, there is a formula $\psi(y,y_1,\ldots, y_k,X)$
such that $\psi(g,g_1,\ldots,g_k,X)$ holds in $\mathbb{M}_X$  if and only if
$g \in \langle g_1,\ldots,g_k\rangle$.  

Such a formula also exists for any non-trivial free partially commutative monoid 
$A_{\Gamma}$ with trivial  center. 
\end{theorem}

We will give a proof for $\mathbb{M}_X$. We will use the fact that the structures $\mathbb{N}$
and $S(\mathbb{N,N})$ are bi-interpretable with $\mathbb{M}_X$.

 A Diophantine
equation is an equation of the form $p(x_1,\ldots ,x_k)=0$, where $p(x_1,\ldots ,x_k)\in  \Z [x_1,\ldots ,x_k].$ A solution to the Diophantine equation is an assignment
$a:\{x_1,\ldots,x_k\} \to \mathbb{Z}$ such that $p(a(x_1),\ldots ,a(x_k))=0$ in
$\mathbb{Z}$. A set $K \subset \mathbb{Z}^k$ is said to be Diophantine
if there is a polynomial $p(x_1,\ldots,x_n,y_1,\ldots,y_k)$ such that for any
$(a_1,\ldots,a_k) \in \mathbb{N}^k$ the equation
$p(x_1,\ldots,x_n,a_1,\ldots,a_k)=0$ has a solution in $\mathbb{Z}$ if and
only if $(a_1,\ldots,a_k) \in K$. 
   
 A set $K \in \mathbb{Z}$ is recursive if there is a
 recursive function $f:K \to \{0,1\}$ such that $n \in K$ if and only
 if $f(n)=1$. A set $K$ is recursively enumerable if it is
 the range of a total recursive function. Matisyasevich \cite{M} proved that any recursively enumerable set is
Diophantine. 

In particular, he proved the following:

\begin{prop}
Every recursively enumerable relation $A(a_1,\ldots,a_m)\in \mathbb N^m$ can be
represented in the form 
\begin{align}
A(a_1,\ldots,a_m) \iff \exists (x_1,\ldots , x_n) \in \mathbb N^n \, P(a_1,\ldots,a_m,x_1,\ldots,x_n)=0,
\end{align}  where $P$ is a polynomial  with integer
coefficients.
\end{prop}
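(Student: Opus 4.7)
The plan is to follow the classical Davis--Putnam--Robinson--Matiyasevich strategy, reducing the statement in three stages: first produce a Davis normal form with a single bounded universal quantifier, then eliminate that bounded quantifier by showing exponentiation is Diophantine, and finally package the result as a single polynomial equation.

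I would start from a machine model of recursive enumerability. Fix any standard model (register machines or Turing machines) and show that a relation $A \subseteq \N^m$ is r.e. iff there is a Diophantine predicate together with a computation-trace description of $A$. Concretely, code a halting computation on input $\bar a$ as a finite sequence of configurations; the statement ``this sequence is a valid halting computation'' is an arithmetic condition in which the only unbounded quantifiers are existentials over the trace, while the transition check is a conjunction ranging over the time steps $z \le y$, where $y$ bounds the length of the trace. This yields the Davis normal form
\begin{equation*}
A(\bar a) \iff \exists y\,\exists \bar x\,\forall z\!\le\! y\;\exists \bar u\; P_0(\bar a,y,z,\bar x,\bar u)=0
\end{equation*}
with $P_0 \in \Z[\,\cdot\,]$.

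The core of the proof is the elimination of the bounded $\forall z\le y$. For this I would prove that the graph of exponentiation $\{(a,b,c):c=a^b\}$ is Diophantine. The standard route is via the Pell equation $x^2-(a^2-1)y^2=1$: its solutions form a sequence $(X_n(a),Y_n(a))$ satisfying linear recurrences, and one shows that membership in this sequence, together with a handful of divisibility/congruence conditions (the Matiyasevich lemmas on the Fibonacci-like growth and on the identity $Y_n \equiv n \pmod{a-1}$), is captured by a system of polynomial equations. From this one extracts a Diophantine definition of $c=a^b$. Once exponentiation is Diophantine, finite sequences can be coded by Gödel's $\beta$-function (Chinese Remainder Theorem), and the assertion ``$\forall z\le y\,\exists\bar u\,P_0=0$'' can be re-expressed as ``there exists a single code $s$ of length $y+1$ such that for each $z\le y$ the $z$-th entry of $s$ witnesses $\exists\bar u$,'' and this in turn becomes a purely existential Diophantine condition. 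Combining this with the outer existentials in the Davis form gives
\begin{equation*}
A(\bar a) \iff \exists (x_1,\ldots,x_n)\in\N^n\; P(\bar a,x_1,\ldots,x_n)=0,
\end{equation*}
which is the claimed representation after clearing denominators and squaring to merge finitely many equations into one.

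The main obstacle, and the step I would expect to consume almost all of the work, is proving that exponentiation is Diophantine; the Davis normal form and the Gödel $\beta$-coding are comparatively routine once the machinery is set up. The delicate point is verifying the number-theoretic identities for the Pell sequence $(X_n,Y_n)$ that let one pin down ``the $n$-th solution'' by polynomial relations without appealing to the index $n$ directly, and then leveraging them to define $a^b$. A minor secondary issue is that the proposition as stated asks for integer-coefficient polynomials and natural-number solutions: this is handled by the four-squares theorem (Lagrange), which lets one replace quantification over $\N$ by quantification over $\Z$ at the cost of four extra existential variables per original variable, so the two versions of the statement are interchangeable.
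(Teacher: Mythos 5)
This proposition is not proved in the paper at all: it is Matiyasevich's theorem, quoted with a citation to \cite{M}, and the proof environment that follows it in the source actually establishes Theorem \ref{th:sub}, using the proposition as a black box. So there is nothing in the paper to compare your argument against line by line. What you have written is a correct outline of the classical Davis--Putnam--Robinson--Matiyasevich proof, and it identifies the right decomposition: Davis normal form from a machine model, Diophantineness of exponentiation via the Pell equation $x^2-(a^2-1)y^2=1$, elimination of the bounded universal quantifier, and the Lagrange four-squares trick to pass between solvability over $\N$ and over $\Z$. Two caveats on completeness rather than correctness. First, the step you describe as ``comparatively routine'' --- eliminating $\forall z\le y$ --- is not handled by the $\beta$-function alone; the standard argument (Davis's exposition, or Matiyasevich's book) needs the Diophantineness of factorials and binomial coefficients, which itself rests on exponentiation, so the dependency order is: Pell lemmas $\Rightarrow$ exponentiation $\Rightarrow$ binomial coefficients $\Rightarrow$ bounded-quantifier theorem. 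Second, the Pell-sequence lemmas (the congruence $Y_n\equiv n \pmod{a-1}$, the growth and divisibility properties that let one pin down the index) are only named, and they constitute the bulk of the real work. As a blind reconstruction of the cited theorem your sketch is sound; as a submitted proof it would need those lemmas carried out, which is precisely why the paper, reasonably, cites \cite{M} instead.
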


Recall from Lemma \ref{bi-list} that $S(\mathbb{N,N})$ is $\emptyset$-interpretable in $\mathbb{N}$. We will refer to the interpretation in $\mathbb{N}$ of a finite sequence (tuple) in $S(N)$ as its code.

{\em Proof of Theorem \ref{th:sub}.}
Consider now $W=\langle g_1,\ldots,g_k \rangle $ and recall that each $g_i=x_{i_1}
\cdots x_{i_m}$ has an interpretation in $S(\mathbb{N,N})$ as the tuple
$t_i=(i_1,\ldots,i_m)$, and this tuple in turn is interpreted as a code
$n_i \in \mathbb{N}$. The set of words in $\langle g_1,\ldots,g_k\rangle $ can be
recursively enumerated. Therefore the set of all tuples $(g_1,\ldots ,g_k,g)$ such that  $g\in \langle g_1,\ldots,g_k\rangle $ is also recursively enumerable.  
Therefore the set $W_k=\{(n_1,\ldots,n_k,s)\}$ of $k+1$-tuples of codes of $(g_1,\ldots ,g_k,g)$ in $\N$  is also recursively enumerable.

   By Matisyasevich's theorem in \cite{M}, we have that the set $W_k$ is Diophantine. So, there
is a polynomial $P(x_1,\ldots,x_n,n_1,\ldots,n_k,s)$ with integer
coefficients such that $P=0$ has a solution in $\mathbb{Z}$ if and
only if $(n_1,\ldots,n_k,s) \in W_k$. Thus, the formula
$\phi(y_1,\ldots,y_k,z): \, \exists x_1,\ldots,x_n \,
P(x_1,\ldots,x_n,y_1,\ldots,y_k,z)=0$ defines $W_k$ in $\mathbb{Z}$. Since
$\mathbb{N}$ is definable in $\mathbb{Z}$, there is some formula
$\phi'(y_1,\ldots,y_k,z)$ which defines $W_k$ in $\mathbb{N}$.

   To show that the set $S_k=\{(g_1,\ldots,g_k,g) \mid g \in
   \langle g_1,\ldots,g_k\rangle\}$ is definable in $\mathbb{M}_X$, we
   use the result in Lemma \ref{lemma:int}.

The formula $\phi'(y_1,\ldots,y_k,z)$ defines
the set $W_k=\{(n_1,\ldots,n_k,s)\}\in \N ^{k+1}$ where for each $1 \leq i \leq k$,
$n_i$ is the code of
an element $g_i \in \mathbb{M}_X$ and $s$ is the code of an element $g
\in \langle g_1,\ldots,g_k\rangle$.  Since $\mathbb{N}$ is $\emptyset$-interpretable in
$\mathbb{M}_X$, by Lemma \ref{lemma:int} there is a formula $\phi ^*(y_1,\ldots,y_k,z,X)$ in
$\mathbb{M}_X$ such that for any $n_1,\ldots,n_k,s \in \mathbb{N}$,
$\mathbb{N} \models \phi'(n_1,\ldots,n_k,s)$ if and only if $\mathbb{M}_X
\models \phi^* (n_1^*,\ldots,n_k^*,s^*,X)$, where $n_1^*,\ldots,n_k^*,s^*$ are the images of $n_1,\ldots,n_k,s$ in $\mathbb{M}_X$. By Lemma \ref{iso-M} the set of tuples
$\{(n_1^*,\ldots,n_k^*,s^*,  g_1,\ldots,g_k,g,\}$ is definable in
$\mathbb{M}_X$ by some  formula $\theta (n_1^*,\ldots,n_k^*,s^*,  g_1,\ldots,g_k,g,X).$  Let $$\psi (g_1,\ldots,g_k,g,X)=$$ $$\exists n_1^*,\ldots,n_k^*,s^* (\phi^*(n_1^*,\ldots,n_k^*,s^*,X) \wedge \theta (n_1^*,\ldots,n_k^*,s^*,  g_1,\ldots,g_k,g,X)).$$ Then $\mathbb{N} \models
\phi'(n_1,\ldots,n_k,s)$ if and only if  $\mathbb{M}_X\models \psi (g_1,\ldots,g_k,g,X)$ if and only if $g \in \langle g_1,\ldots,g_k\rangle $, so we have
our result.

Similarly one can prove the result for $A_{\Gamma}$ without the center and the following result.

\begin{theorem} Every recursively enumerable language in the alphabet $X$ is definable in $\mathbb{M}_X$.
\end{theorem}

This implies that every regular language is definable in $\mathbb{M}_X$.

\section{Isolation of Types, Homogeneity,  and QFA property}

Let $A \subset \MB$. A set $p$ of $\mathcal{L}_A$-formulas in $n$ free
variables is called an \textit{$n$-type} of $Th(\MB, \{a\}_{a \in A})$ if $p \cup Th_A(\MB)$ is
satisfiable. A type $p$ is called \textit{complete} if for each $\mathcal{L}_A$-formula
$\phi$ with $n$ free variables, either $\phi$ or $\neg \phi$ is in
$p$. Moreover, $p$ is said to be \textit{realized} in $\MB$ if there is some $\bar b \in \MB$
such that $\MB \models  \phi (\bar b)$ for all $\phi \in
p$. For a tuple $\bar b \in \MB$, the set $tp^{\MB}(\bar b /A) =
\{\phi (\bar x) \in \mathcal{L}_A \mid \MB \models \phi (\bar b) \}$ is
a complete $n$-type.

A complete $n$-type $p$ is isolated if there is a formula $\phi( \bar
x) \in p$ such that for all $\mathcal{L}$-formulas $\psi(\bar x), \psi(\bar x)
\in p$ if and only if $Th(\MB) \models (\phi(\bar x) \implies
\psi( \bar x))$. Moreover, $\MB$ is called \textit{atomic} over
$A$ if every type that is realized in $\MB$ is isolated.

\begin{remark}
Let $\MB$ be a countable structure. Then $\MB$ is
atomic if for any $\bar b \in \MB$, the orbit $Aut(\MB).\bar b$ is
$\emptyset$-definable.
\end{remark}
A model is {\em homogeneous} if  two finite tuples realize the same types if and only if they are automorphically equivalent.  Every countable atomic model is homogeneous.
\begin{theorem} $\mathbb{M}_X$ is atomic and, therefore, homogeneous.
\end{theorem}

\begin{proof}
Note that since the basis $X$ of $\mathbb{M}_X$ is definable, any
automorphism must send basis elements to basis
elements. Morever, an automorphism is completely determined by where it
sends the basis elements. Thus,
the orbit of a word $w=x_{i_1}^{e_1} \cdots x_{i_n}^{e_n} \in
\mathbb{M}_X$ is the set of words $\{s=x_{\sigma(i_1)}^{e_1} \cdots
x_{\sigma(i_n)}^{e_n}\}$ where $\sigma$ is a permutation of the set
$\{1,\ldots,n\}$. It is easy to see that this set is definable. For
example, the orbit of a word $x_1^2x_3x_2x_3$ can be defined by the
formula $\phi(x): \, \exists y_1,y_2,y_3 \in X \, (y_1 \neq y_2 \neq
y_3  \, \wedge \, x = y_1^2y_2y_3y_2)$. 
Similarly, we can show that the orbits of arbitrary tuples $\bar b$ of
$\mathbb{M}_X$ are definable. Thus, $\mathbb{M}_X$ is atomic.

\end{proof}
The same result is true for $A_{\Gamma}$ without the center because the standard generating set is definable. Notice that non-abelian free groups are also homogeneous \cite{PS}, \cite{O}.

\begin{definition} Fix a finite signature. An infinite finitely generated structure $G$ is
quasi-finitely axiomatizable (QFA) if there is a first-order sentence
$\phi$ such that \begin{itemize}
\item $G \models \phi $,
\item if $H$ is a finitely generated structure in the same signature such that $H\models \phi$,
then $G=H.$\end{itemize}\end{definition}

A structure
$G$ is {\em prime} if $G$ is isomorphic to an elementary submodel of each $H$ elementarily equivalent to $G$. 

\begin{remark} If $|X|>1$, then $\mathbb{M}_X$ is QFA and prime. A non-trivial $A_{\Gamma}$ with trivial  center  is QFA and prime.
\end{remark} 
This follows from \cite{nies}, Theorem 7.14 and Corollary \ref{bi_M-N}. Indeed, by \cite{nies} a finitely generated structure $A$ in  finite signature that is bi-interpretable with integers is prime and QFA.

In contrast to this two non-abelian free groups of different ranks are elementarily equivalent, therefore a non-abelian free group is not QFA.
\section{Quantifier elimination}\label{elimination}

In this section we will show that there is no quantifier elimination in the theory of any structure that is bi-interpretable with $\mathbb N$.  In particular, there is no quantifier elimination in the theory of a free monoid of rank at least two.

Let $\mathcal{L}$ be a first-order language. Recall that a  formula $\phi$ in $\mathcal{L}$ is in a prenex normal form if $\phi = Q_1 y_1Q_2y_2 \ldots Q_sy_s\phi_0(x_1, \ldots,x_m)$ where $Q_i$ are quantifiers ($\forall$ or $\exists$), and $\phi_0$ is a quantifier-free formula in $\mathcal{L}$. It is known that every formula in $\mathcal{L}$ is equivalent to a formula in the prenex normal form.
A formula $\phi = Q_1 y_1Q_2y_2 \ldots Q_sy_s\phi_0(x_1, \ldots,x_m)$ in the prenex normal form is called $\Sigma_n$ formula if the sequence of quantifiers  $ Q_1 Q_2 \ldots Q_s$ begins with the  existential quantifier $\exists$  and alternates  $n-1$  times between series of existential and universal quantifiers. Similarly,  a formula $\phi$ above is $\Pi_n$ formula if the sequence of quantifiers  $ Q_1 Q_2 \ldots Q_s$ begins with the  universal  quantifier $\forall$  and alternates $n-1$  times between series of existential and universal quantifiers. 

For a structure  $\MB$  of the language $\mathcal{L}$ denote by $\Sigma_n(\MB)$ the  set of all subsets of ${\MB}^m$, $m \in \N$, definable in $\MB$ by  $\Sigma_n$ formulas  $\phi(x_1, \ldots,x_m)$, $m \in \N$. Replacing in the definition above  $\Sigma_n$ by $\Pi_n$  one gets the set $\Pi_n(\MB)$.  Let  $\Sigma_0(\MB) = \Pi_0(\MB)$ be the set of all subsets definable in $\MB$ by quantifier-free formulas.  Clearly,
$$
\Sigma_0(\MB) \subseteq  \Sigma_0(\MB) \subseteq \ldots \subseteq \Sigma_n(\MB) \subseteq \ldots
$$
$$
\Pi_0(\MB) \subseteq  \Pi_0(\MB) \subseteq \ldots \subseteq \Pi_n(\MB) \subseteq \ldots
$$
 The sets $\Sigma_n(\MB)$ and $\Pi_n(\MB)$ form the so-called {\em arithmetical hierarchy} over $\MB$ denoted by ${\mathcal H}(\MB)$.  It is easy to see that if $\Sigma_n (\MB) = \Sigma_{n+1}(\MB)$ (or $\Pi_n (\MB) = \Pi_{n+1}(\MB)$) for some $n \in \N$ then $\Sigma_m (\MB) = \Sigma_{m+1}(\MB)$ and $\Pi_m (\MB) = \Pi_{m+1}(\MB)$ for every natural $m \geq n$.  We say that the hierarchy ${\mathcal H}(\MB)$ {\em collapses} if $\Sigma_n (\MB) = \Sigma_{n+1}(\MB)$ for some $n \in \N$, otherwise it is called {\em proper}.

\begin{theorem}
Let $\MB$ be a structure in the language $\mathcal{L}$ that is bi-interpretable with $\N$. Then for any $n \in \N$ there is a formula $\phi_n$ in $\mathcal{L}$ such that the formula $\phi_n$ is not equivalent in $\MB$ to any boolean combination of formulas from $\Pi_n$ or $\Sigma_n$ (with constants from $\MB$).
\end{theorem}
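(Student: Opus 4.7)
The plan is to pull back the classical strictness of the arithmetical hierarchy over $\N$ to $S$ through the bi-interpretation. Concretely, I would assume toward a contradiction that for some fixed $n$ every formula of $L$ defines, in $S$, a set in the boolean closure of $\Sigma_n(S) \cup \Pi_n(S)$, and derive a collapse of the hierarchy over $\N$. This contradicts the classical Kleene hierarchy theorem, which states that for every $k \in \N$ there is a $\Sigma_{k+1}$-definable subset of $\N$ that is not equivalent to any boolean combination of $\Sigma_k$- and $\Pi_k$-formulas.

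First I would record a complexity-preservation lemma for interpretations: if $\mathbb{A}$ is interpreted in $\mathbb{B}$ by formulas of quantifier complexity bounded by a constant $c$, then the translation $\psi \mapsto \psi^{*}$ from Lemma \ref{lemma:int} sends $\Sigma_k$- (respectively $\Pi_k$-) formulas of $L(\mathbb{A})$ to $\Sigma_{k+c}$- (respectively $\Pi_{k+c}$-) formulas of $L(\mathbb{B})$, and hence the boolean closure of $\Sigma_k \cup \Pi_k$ on one side to that of $\Sigma_{k+c} \cup \Pi_{k+c}$ on the other. Applying this lemma to both directions of the bi-interpretation of $\N$ with $S$, and absorbing the fixed quantifier complexity of the definable isomorphism $\N \to \N^{**}$ coming from bi-interpretability, I would fix a single constant $c$ such that any $\Sigma_k$-definable subset of $\N$, when passed through $S$ and back via the round trip provided by bi-interpretability, yields a subset of $\N$ definable at level $\Sigma_{k+2c}$.

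With this in hand, fix $k = n + 2c + 1$ and let $\psi \in \Sigma_{k+1}$ be the witness given by the Kleene hierarchy theorem for $\N$. Translating $\psi$ to $L$ via the interpretation of $\N$ in $S$ yields a formula $\psi^{*}$ in $L(S)$; by the contradiction hypothesis, $\psi^{*}$ is equivalent in $S$ to a boolean combination $\chi$ of $\Sigma_n$- and $\Pi_n$-formulas of $L$. Translating $\chi$ back through the interpretation of $S$ in $\N$ and conjugating with the definable isomorphism $\N \to \N^{**}$, one obtains a boolean combination of $\Sigma_{n+2c}$- and $\Pi_{n+2c}$-formulas of $L(\N)$ defining the same subset of $\N$ as $\psi$. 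Since $n + 2c < k$, this contradicts the defining property of $\psi$, and the desired $\phi_n$ may be taken to be the $L$-translation of a Kleene witness at any sufficiently high level above $n$.

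The main obstacle I anticipate is the bookkeeping on quantifier complexity: one must verify carefully that the translation from Lemma \ref{lemma:int} adds only a bounded number of unbounded quantifiers, namely those coming from the fixed interpreting formulas, and that composing the two directions of the bi-interpretation with the definable isomorphism $\N \to \N^{**}$ introduces only a further bounded number. Because all of these auxiliary formulas are fixed once and for all, the cost of the round trip is a single constant $c$ independent of $k$, so a Kleene witness at any level exceeding $n + 2c$ transfers across the bi-interpretation and produces the required $\phi_n$.
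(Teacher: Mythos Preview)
Your proposal is correct and follows essentially the same approach as the paper: assume a collapse at level $n$ in $S$, push formulas from $\N$ through $S$ and back using the bi-interpretation at a bounded quantifier cost, and derive a collapse of the arithmetical hierarchy in $\N$, contradicting its properness. Your version is slightly more explicit about the bookkeeping (naming the constant $c$ and invoking a specific Kleene witness at level $k+1>n+2c$), whereas the paper simply asserts that the round trip adds a fixed $m$ and appeals to the fact that the arithmetical hierarchy in $\N$ is proper; the arguments are otherwise the same.
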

\begin{proof}  

Suppose, to the contrary, that for some $n \in \N$ any formula  $\phi (\bar x)$ in the language $\mathcal{L}$ is equivalent in $\MB$ to some boolean combination  $\phi'(\bar x)$ of formulas from $\Pi_n$ or $\Sigma_n$ with constants from $\MB$. Take  an arbitrary first-order formula $\psi(\bar z)$ of the language of $\N$. Since $\MB$ is bi-interpretable in $\N$ the  formula $\psi(\bar z)$ can be rewritten into  a formula $\phi (\bar x)$ of the  language $\mathcal{L}$ such that for any values $\bar a$ of $\bar z$, $\N \models \psi(\bar a) \Longleftrightarrow S \models \phi (\bar b)$, where $\bar a\rightarrow\bar b$ when $\N$ is interpreted in $S$. By our assumption there is a formula $\phi'(\bar x)$, which is a boolean combination   of formulas from $\Pi_n$ or $\Sigma_n$ perhaps with constants from $\MB$ such that $\phi(\bar x)$ is equivalent to $\phi'(\bar x)$ in $\MB$. Since $\MB$ is bi-interpretable in $\N$  there is a number $m$ which depends only on the bi-interpretation such that $\phi'(\bar x)$ can be rewritten into a formula $\psi'(\bar z)$, which is a boolean combination of formulas from $\Pi_{n+m}$ or $\Sigma_{n+m}$  in the language of $\N$ such that $\N \models \psi'(\bar a) \Longleftrightarrow S \models \phi'(\bar b)$. It follows that  $\psi (\bar z)$ is equivalent to $\psi '(\bar z)$ in $\N$, i.e., every formula $\psi$ of the language of $\N$ is equivalent in $\N$ to some formula $\psi '$ which is a boolean combination of formulas from $\Pi_{n+m}$ in the language of $\N$. However, this is false since the arithmetical hierarchy in $\N$ is proper. It follows that our assumption is false, so the theorem holds.
\end{proof}

\begin{cor}
The hierarchy ${\mathcal H}(\mathbb{M}_X)$ is proper.
\end{cor}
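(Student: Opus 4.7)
The plan is to derive the corollary as an essentially immediate consequence of Theorem \ref{th:elimination}, once bi-interpretability of $M_X$ with $\N$ has been set up. First I would spell out the bi-interpretability chain explicitly: Theorem \ref{th:bi} gives $M_X$ bi-interpretable with $S(\N,\N)$, and the discussion preceding the corollary records that $S(\N,\N)$ is bi-interpretable with $\N$ (via the Kharlampovich--Myasnikov Lemma 3 cited as \cite{KM}, using that $\N$ is interpretable in $S(\N,\N)$ as the second sort and the required isomorphism $S(\N,\N)\to S(\N,\N)^{\diamond\diamond}$ is definable). Since bi-interpretability is transitive, $M_X$ is bi-interpretable with $\N$.

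Next I would apply Theorem \ref{th:elimination} directly to $S=M_X$ in its language $L = L_X$. The theorem produces, for every $n\in\N$, a formula $\phi_n(\bar x)$ of $L_X$ which is not equivalent in $M_X$ to any boolean combination of formulas from $\Pi_n$ or $\Sigma_n$ (even allowing constants from $M_X$).

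The final step is to convert this non-equivalence into a statement about the arithmetical hierarchy $\mathcal{H}(M_X)$ not collapsing. Here one uses the elementary observation, already recorded in the excerpt, that $\Sigma_n(\mathcal S)=\Sigma_{n+1}(\mathcal S)$ for a single $n$ forces stabilization of both $\Sigma_m$ and $\Pi_m$ at all $m\ge n$, so that every definable subset of $M_X^k$ would be definable by a single $\Sigma_n$ (hence, trivially, by a boolean combination of $\Sigma_n$ or $\Pi_n$) formula. This contradicts the existence of $\phi_n$ from Theorem \ref{th:elimination}. Therefore $\Sigma_n(M_X)\subsetneq \Sigma_{n+1}(M_X)$ for every $n$, i.e.\ the hierarchy is proper.

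There is really no obstacle here beyond bookkeeping: the corollary is a packaging result. The only place where care is needed is verifying that the bi-interpretability in the hypothesis of Theorem \ref{th:elimination} is indeed what one has for $M_X$ and $\N$ (rather than merely mutual interpretability), since the proof of the theorem uses that any $L_X$-formula can be pulled back to an arithmetic formula and vice versa up to a definable isomorphism with a controlled increase $m$ in quantifier complexity --- and this control is exactly what bi-interpretability, as opposed to mere interpretability, provides.
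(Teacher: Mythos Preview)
Your proposal is correct and matches the paper's intended argument: the paper sets up the bi-interpretability of $M_X$ with $\N$ (via $S(\N,\N)$ and Theorem \ref{th:bi}) in the paragraph immediately preceding the corollary, and the corollary is then an immediate consequence of Theorem \ref{th:elimination} applied to $S=M_X$, using exactly the collapse-implies-boolean-combination observation you spell out. The paper does not give a separate proof of the corollary, so your write-up is simply a faithful unpacking of the implicit one-line deduction.
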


We would like to thank the referee for carefully reading our manuscript
and for giving constructive comments which substantially helped improving the
quality of the paper and inspired new results.  We would like to thank Alexei Miasnikov and Ben Steinberg for useful comments and discussions.

\end{document}